\definecolor{subsectioncolor}{rgb}{0,0.541,0.855}
\title{A homotopy theorem for incremental stability}
\author{Thomas Chaffey$^{1}$, Andrey Kharitenko$^{2}$, Fulvio Forni$^{1}$, Rodolphe
    Sepulchre$^{1,3}$
\thanks{$^{1}$University of Cambridge, Department of Engineering, Trumpington Street,
Cambridge CB2 1PZ.} 
\thanks{$^{2}$ Automatic Control Laboratory,
Swiss Federal Institute of Technology Zürich (ETHZ), 8092 Zürich, Switzerland.}
\thanks{$^{3}$KU Leuven, Department of
Electrical Engineering (STADIUS), KasteelPark Arenberg, 10, B-3001 Leuven, Belgium.}
\thanks{Emails: {\tt\small tlc37@cam.ac.uk},  {\tt\small akharitenko@student.ethz.ch}, {\tt
\small \{f.forni, r.sepulchre\}@eng.cam.ac.uk}.}
\thanks{The research leading to these results has received funding from the European Research Council under the
Advanced ERC Grant Agreement SpikyControl n.101054323.  The work of the first author
is supported by Pembroke College, University of Cambridge.}
}
\begin{document}
\maketitle

\begin{abstract}
    A theorem is proved to verify incremental stability of a feedback system via a homotopy from a known incrementally stable system.  
    A first corollary of that result is that incremental stability
    may be verified by separation of Scaled Relative Graphs, correcting two
    assumptions in \autocite[Theorem 2]{Chaffey2023}.  A second corollary provides an
    incremental version of the classical IQC stability theorem.
\end{abstract}

\section{Introduction}

There are two standard approaches to verifying absolute stability of a feedback
interconnection: the first, introduced by \textcite{Zames1966a}, is to work in an
extended space, which includes unstable signals, and show that all signals are, in
fact, stabilized by the feedback.  The second, pioneered by \textcite{Megretski1997},
is based upon a \emph{homotopy} argument: it is shown that a known
stable system can be perturbed continuously to produce the desired feedback
interconnection, in such a way that stability is never lost.  Such an argument has
two ingredients: firstly, one shows that a system remains stable
under a small perturbation, provided a gain bound is satisfied.  Secondly, one must
verify such a gain bound along the entire path of perturbations. In the
original theorems of Megretski and Rantzer \autocite{Megretski1997, Rantzer1997}, it was shown that
perturbations which were small in the gap metric \autocite{Georgiou1997} preserved
stability, and (soft) \emph{Integral Quadratic Constraints (IQCs)} were
used to verify gain bounds.

In this paper, we offer an alternative approach: an \emph{incremental} homotopy
argument is developed (Theorem \ref{thm:homotopy_incremental}), where gain bounds are
replaced by incremental gain bounds, and the gap metric is replaced by a minor
modification of the incremental small gain theorem
(Theorem~\ref{thm:inc_small_gain}).  In a similar vein to the
incremental versions of the small
gain theorem \autocite{Desoer1975}, the stronger assumption of incremental
gain allows well-posedness and causality assumptions to be weakened.  While in a
standard homotopy argument, well-posedness and causality of the feedback
interconnection must be assumed along the entire path, requiring reference to an
extended space, in the incremental setting,
only incremental boundedness of the operators must be assumed, which may be verified
without any extended space.  The setting of Theorem~\ref{thm:homotopy_incremental} is
more general than the typical IQC setting, as neither operator is assumed to be
linear nor time-invariant.

Theorem~\ref{thm:homotopy_incremental} is a standalone and improved version of an
incremental stability theorem recently proved by the authors in
\autocite{Chaffey2023}, in the context of a graphical stability criterion based on
the \emph{Scaled Relative Graph (SRG)}, a graphical representation of an operator
introduced by \textcite{Ryu2021}. In \autocite{Chaffey2023}, it was shown that the
SRG generalizes the Nyquist diagram of an LTI operator to an arbitrary nonlinear
operator, and a generalization of the Nyquist criterion was given in
\autocite[Theorem 2]{Chaffey2023}.   As a first corollary of
Theorem~\ref{thm:homotopy_incremental}, we reprove \autocite[Theorem 2]{Chaffey2023},
correcting two technical assumptions.  This theorem has also been recently
generalized by \textcite{Chen2024}, removing a technical assumption, however that
generalization relies on an extended space, which is not required here.  

As a second corollary to Theorem~\ref{thm:homotopy_incremental}, we obtain an incremental version of the IQC stability theorem of
\textcite{Megretski1997}.  This corollary is closely related to \autocite[Theorem 7.40]{Scherer2015}, but does not make any assumptions of causality, nor rely on an
extended space.
It has been shown that incremental gain bounds can
be verified using closely related \emph{differential} IQCs \autocite{Wang2019}.
Incremental IQCs are used in the study of periodic solutions by
\textcite{Jonsson2001}, in the study of neural networks by
\textcite{Gronqvist2022} and in system identification by \textcite{vanWaarde2023}. It has recently been
shown that well-posedness and causality assumptions can be relaxed in the
non-incremental setting \autocite{Freeman2022}.  

The remainder of this note is structured as follows.  In Section~\ref{sec:prelims},
we introduce necessary notation and preliminary results.  In
Section~\ref{sec:homotopy}, we prove our main result
(Theorem~\ref{thm:homotopy_incremental}), a general incremental homotopy theorem.  In
Section~\ref{sec:SRG}, it is shown how incremental stability may be verified using separation of
SRGs, and in Section~\ref{sec:IQC}, an incremental IQC stability theorem is given.
Finally, in Section~\ref{sec:relax}, it is shown how assumptions of incremental
boundedness may be relaxed in exchange for well-posedness and causality, giving a
middle ground between the incremental approach of
Theorem~\ref{thm:homotopy_incremental} and standard non-incremental arguments.

\section{Preliminaries}\label{sec:prelims}

\subsection{Signals and systems}

Let $\mathcal{F}$ denote the space of all functions mapping the interval $[0,
\infty)$ into $\R^n$.
Let $L_2^n$ be the space of equivalence classes of trajectories $u \in \mathcal{F}$ and satisfying
\begin{IEEEeqnarray*}{rCl}
    \norm{u} := \left(\int_0^\infty u(t)\tran u(t) \dd t\right)^{\frac{1}{2}} < \infty,
\end{IEEEeqnarray*}
under the equivalence $u \sim y \iff \norm{u - y} = 0$.  We will abuse terminology
in the usual way and say that a trajectory $u \in \mathcal{F}$ \emph{belongs to $L_2^n$} when
it belongs to an equivalence class in $L_2^n$.  For the remainder of this note, we
will drop the dimension $n$, and simply denote $L_2^n$ by $L_2$, where $n$ is
arbitrary.  Given an element $x \in L_2$, we let $\hat{x}$ denote its Fourier
transform.

By an \emph{operator} on  a domain $D \subseteq L_2$ we will mean a single-valued map
$H:D \to L_2$.  The domain $D$ will also be denoted $\dom H$.  We will associate an operator $H$ with its \emph{relation} or \emph{graph},
defined as $\{(u, y)\; | \; y = H(u)\} \subseteq L_2 \times L_2$, and denote the two in the same way.
Scalar multiplication, summation, and inversion of relations are defined as follows:
\begin{IEEEeqnarray*}{rCl}
    \alpha H &:=& \{(u, \alpha y)\; | \; y = H(u)\}\\
    H_1 + H_2 &:=& \{(u, y + z)\; | \; y = H_1(u), z = H_2(u)\}\\
    H^{-1} &:=& \{(y, u)\; | \; y = H(u)\}.
\end{IEEEeqnarray*}
We note that the inverse of an operator may be multivalued, and therefore not
necessarily an operator.   However, these relational operations are always well
defined.

Given an operator $H:L_2 \to L_2$, we define the \emph{gain} of $H$ on $L_2$, denoted $\norm{H}$, to be the smallest $\gamma > 0$ such
that there exists $\beta \in \R$ such that, for all inputs $u \in L_2$,
\begin{IEEEeqnarray*}{rCl}
    \norm{H(u)} \leq \gamma \norm {u} + \beta.
\end{IEEEeqnarray*}
If the gain of an operator is finite, the operator is said to be \emph{bounded}.  If
$\beta = 0$, the operator is said to have \emph{finite gain with zero offset}.

The \emph{incremental gain} of $H$ on $L_2$ is defined to be
\begin{IEEEeqnarray*}{rCl}
    \norm{H}_{\Delta} := \sup_{u_1, u_2 \in L_2, u_1 \neq u_2} \frac{\norm{H(u_1) - H(u_2)}}{\norm{u_1 - u_2}}.
\end{IEEEeqnarray*}
If the incremental gain of an operator is finite, the operator is said to be
\emph{incrementally bounded}.  If an operator derives from a dynamical system,
incremental boundedness is equivalent to asymptotic stability of any input/output
trajectory, under reachability and observability assumptions \autocite{Fromion1996}.

%


Consider the negative feedback interconnection of two operators, $H_1$ and $H_2$, defined by
the equations
\begin{IEEEeqnarray}{rCl}
    e &=& u - H_2(y)\label{eq:feedback_one}\\
    y &=& H_1(e)\label{eq:feedback_two}
\end{IEEEeqnarray}
and illustrated in Figure~\ref{fig:feedback}.
We make the standing assumption that 
this negative feedback interconnection defines a (single-valued) operator from some (possibly empty) domain $D
\subseteq L_2$ to $L_2$, mapping $u$ to $y$. We denote the relation of this operator
by $[H_1, H_2] := \{(u, y) \; | ;\ \text{there exits a unique } e \text{ s.t. }
\eqref{eq:feedback_one}--\eqref{eq:feedback_two} \text{ are satisfied} \}$.  

\begin{figure}[h]
    \centering
    \includegraphics{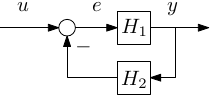}
    \caption{Negative feedback interconnection of $H_1$ and $H_2$.}
    \label{fig:feedback}
\end{figure}

We will make use of the following two technical lemmas.

\begin{lemma}
    Given operators $H_1, H_2 : L_2 \to L_2$, 
    \begin{IEEEeqnarray*}{rCl}
        [H_1, H_2] = (H_1^{-1} + H_2)^{-1}.
    \end{IEEEeqnarray*}
\end{lemma}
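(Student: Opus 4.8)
The claim is an identity of relations, so the cleanest approach is to prove set equality by showing that a pair $(u,y)$ lies in the left-hand relation if and only if it lies in the right-hand relation, unwinding both sides through the definitions of the relational operations given in the excerpt. The plan is to start from the feedback equations \eqref{eq:feedback_one}--\eqref{eq:feedback_two} and algebraically eliminate the internal signal $e$, so that $(u,y) \in [H_1, H_2]$ is recast purely as a relation between $u$ and $y$, and then to check that this recast condition is exactly the membership condition for $(H_1^{-1} + H_2)^{-1}$.

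First I would take a pair $(u,y) \in [H_1, H_2]$. By definition there is a unique $e$ with $y = H_1(e)$ and $e = u - H_2(y)$. From $y = H_1(e)$ we read, using the definition of the relational inverse, that $(y, e) \in H_1^{-1}$, i.e.\ $e \in H_1^{-1}(y)$. Substituting into the first feedback equation gives $u = e + H_2(y) = H_1^{-1}(y) + H_2(y)$, which is precisely the statement $(y, u) \in H_1^{-1} + H_2$ by the definition of the sum of relations. Applying the inverse one more time yields $(u, y) \in (H_1^{-1} + H_2)^{-1}$, giving one inclusion. For the reverse inclusion I would run the same chain of equivalences backwards: starting from $(u,y) \in (H_1^{-1} + H_2)^{-1}$, I recover $u \in H_1^{-1}(y) + H_2(y)$, pick $e$ with $(y,e) \in H_1^{-1}$ so that $y = H_1(e)$, and then $e = u - H_2(y)$ reconstitutes both feedback equations.

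The main subtlety, and the step I expect to require the most care, is the bookkeeping around \emph{single-valuedness and uniqueness} rather than the algebra itself. The relation $H_1^{-1}$ may be multivalued, and the sum and inverse operations are defined on relations precisely so that the manipulations above stay meaningful even when the objects are not operators; the standing assumption preceding the lemma guarantees that $[H_1, H_2]$ picks out exactly those $(u,y)$ for which the internal $e$ is \emph{unique}, so I must make sure that the equivalence $e \in H_1^{-1}(y) \iff y = H_1(e)$ is handled as a relational membership statement and that the uniqueness of $e$ is carried consistently through both directions. Concretely I would phrase every step as an ``if and only if'' between relational memberships, which makes the two inclusions collapse into a single biconditional chain and keeps the possibly multivalued inverse from causing trouble. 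Once the equivalences are lined up, the identity follows immediately; the result is essentially a formal verification that the feedback relation is the Redheffer/loop-closure combination of the two operators written in terms of the graph operations.
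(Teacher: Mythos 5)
Your proposal is correct and takes essentially the same route as the paper, whose proof simply unwinds the definitions of relational inverse and sum to write $(H_1^{-1} + H_2)^{-1} = \{(e+z, y) \; | \; y = H_1(e),\ u - e = H_2(y)\}$ and identifies this set with $[H_1, H_2]$; your biconditional membership chain is just the element-wise version of that computation. The uniqueness bookkeeping you flag resolves trivially, since $H_2$ is single-valued and so $e = u - H_2(y)$ is the only candidate witness, which is why the paper's one-line argument suffices.
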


\begin{proof}
    Applying the definitions of relational inverse and sum, we arrive at
    \begin{IEEEeqnarray*}{rCl}
        (H_1^{-1} + H_2)^{-1} &=& \{ (e + z, y) \; | \; y = H_1(e), u - e = H_2(y)\}.
    \end{IEEEeqnarray*}
Setting $u = e+z$, we arrive at the definition of $[H_1, H_2]$.
\end{proof}

\begin{lemma}\label{lem:feedback_identity}
Given $\tau, \nu \geq 0$ and operators $H_1, H_2: L_2 \to L_2$, 
\begin{IEEEeqnarray}{rCl}
[H_1, (\tau + \nu)H_2] = [[H_1, \tau H_2], \nu H_2].
\end{IEEEeqnarray}
\end{lemma}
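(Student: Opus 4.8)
The plan is to reduce the identity to a purely relational computation, using the preceding lemma to rewrite each feedback interconnection as an inverse of a sum. Both sides are relations in $L_2 \times L_2$, so it suffices to show they consist of the same pairs $(u,y)$; the slickest route is to manipulate the relational expressions rather than chase signals by hand.

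First I would apply the preceding lemma to the left-hand side to obtain
\begin{IEEEeqnarray*}{rCl}
[H_1, (\tau+\nu)H_2] = \left(H_1^{-1} + (\tau+\nu)H_2\right)^{-1}.
\end{IEEEeqnarray*}
For the right-hand side I would apply the same lemma twice: once to the inner loop, giving $[H_1, \tau H_2] = (H_1^{-1} + \tau H_2)^{-1}$, and then to the outer loop, giving
\begin{IEEEeqnarray*}{rCl}
[[H_1, \tau H_2], \nu H_2] = \left((H_1^{-1} + \tau H_2) + \nu H_2\right)^{-1}.
\end{IEEEeqnarray*}
The problem then collapses to the relational identity $(H_1^{-1} + \tau H_2) + \nu H_2 = H_1^{-1} + (\tau+\nu)H_2$, which would follow from associativity of the relational sum together with $\tau H_2 + \nu H_2 = (\tau+\nu)H_2$.

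The remaining step is to verify $\tau H_2 + \nu H_2 = (\tau+\nu)H_2$ directly from the definitions of scalar multiplication and relational sum. Unfolding, the left-hand side is $\{(u, \tau z + \nu w) \mid z = H_2(u),\, w = H_2(u)\}$; here I would invoke single-valuedness of $H_2$ to force $z = w = H_2(u)$, so that $\tau z + \nu w = (\tau+\nu)H_2(u)$ and the set equals $(\tau+\nu)H_2$.

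The main obstacle is a bookkeeping subtlety rather than a computation: the preceding lemma is stated for operators $L_2 \to L_2$, whereas the inner interconnection $[H_1, \tau H_2]$ is in general only a partially defined (possibly multivalued) relation, so applying the lemma to the outer loop is not literally licensed. The clean fix is to observe that the lemma's proof is entirely relational --- it uses only the definitions of relational inverse and sum --- and therefore holds verbatim for relations, with domains handled automatically by the relational operations. Alternatively, to avoid invoking the lemma at all, I would argue by direct substitution: a pair $(u,y)$ lies in the right-hand side exactly when there exist signals $f, e$ with $f = u - \nu H_2(y)$, $e = f - \tau H_2(y)$ and $y = H_1(e)$; eliminating $f$ gives $e = u - (\tau+\nu)H_2(y)$ with $y = H_1(e)$, which is precisely membership in the left-hand side, and single-valuedness of $H_2$ makes the intermediate signal $f$ unique, so that no solutions are gained or lost.
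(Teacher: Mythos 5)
Your proof is correct and follows essentially the same route as the paper's: both reduce the identity to the relational computation $(H_1^{-1} + (\tau+\nu)H_2)^{-1} = ((H_1^{-1}+\tau H_2)^{-1})^{-1} + \nu H_2)^{-1}$ via the preceding lemma, using $(H^{-1})^{-1}=H$ and the single-valuedness of $H_2$ to justify $(\tau+\nu)H_2 = \tau H_2 + \nu H_2$. Your added remark that the preceding lemma must be read relationally to apply to the (possibly partially defined) inner loop $[H_1,\tau H_2]$ is a legitimate refinement of a point the paper's proof silently glosses over, but it does not change the argument.
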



\begin{proof}
Note that $(H^{-1})^{-1} = H$, and, given a single-valued operator, $(\tau + \nu)H = \tau H + \nu H$.  We then have:
    \begin{IEEEeqnarray*}{+rCl+x*}
    [H_1, (\tau + \nu)H_2] &=& (H_1^{-1} + (\tau + \nu)H_2)^{-1}\\
                           &=& (H_1^{-1} + \tau H_2 + \nu H_2)^{-1}\\
                           &=& ((H_1^{-1} + \tau H_2)^{-1})^{-1} + \nu H_2)^{-1}\\
                           &=& [[H_1, \tau H_2], \nu H_2]. & \qedhere
    \end{IEEEeqnarray*}
\end{proof}

The following theorem is a modified version of the classical incremental small gain
theorem \autocite[Theorem 30, p. 184]{Desoer1975}.  It differs from the classical statement of the
theorem in that the operators are defined only on $L_2$, rather than an extended
space, and are not required to map $0$ to $0$. It is closely related to the incremental gap robustness result of \textcite[Theorem 1]{Georgiou1997}.

\begin{theorem}[Incremental small gain theorem]\label{thm:inc_small_gain}
    Let $H_1, H_2: L_2 \to L_2$ be operators with incremental gain bounds of
    $\gamma_1, \gamma_2$ respectively.  If $\gamma_1\gamma_2 < 1$, then for any 
    $u \in L_2$, there exist unique $e, y \in L_2$ satisfying the feedback
    interconnection \eqref{eq:feedback_one}--\eqref{eq:feedback_two}.
\end{theorem}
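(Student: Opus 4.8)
The plan is to reduce existence and uniqueness of a solution to a fixed-point problem and then invoke the contraction mapping theorem. Substituting \eqref{eq:feedback_two} into \eqref{eq:feedback_one}, any solution must satisfy $e = u - H_2(H_1(e))$, and conversely, given such an $e$, setting $y = H_1(e)$ recovers a full solution of \eqref{eq:feedback_one}--\eqref{eq:feedback_two}. I would therefore define the map $T_u : L_2 \to L_2$ by $T_u(e) := u - H_2(H_1(e))$ and observe that solutions of the feedback equations correspond exactly to fixed points of $T_u$.

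First I would check that $T_u$ is well-defined as a map of $L_2$ into itself: since $H_1$ and $H_2$ map $L_2$ into $L_2$ and $u \in L_2$, the composition $H_2 \circ H_1$ followed by translation by $u$ keeps us within $L_2$. Next, using the two incremental gain bounds in succession, for any $e_1, e_2 \in L_2$ I would estimate
\begin{IEEEeqnarray*}{rCl}
\norm{T_u(e_1) - T_u(e_2)} &=& \norm{H_2(H_1(e_1)) - H_2(H_1(e_2))} \\
&\leq& \gamma_2 \norm{H_1(e_1) - H_1(e_2)} \leq \gamma_1 \gamma_2 \norm{e_1 - e_2},
\end{IEEEeqnarray*}
so that $T_u$ is Lipschitz with constant $\gamma_1 \gamma_2$, which by hypothesis is strictly less than one; hence $T_u$ is a strict contraction.

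Finally, since $L_2$ is a complete metric space (indeed a Hilbert space) and $T_u$ is a contraction, the Banach fixed-point theorem supplies a unique $e \in L_2$ with $e = T_u(e)$. Setting $y = H_1(e) \in L_2$ then yields the unique pair $(e, y)$ solving \eqref{eq:feedback_one}--\eqref{eq:feedback_two}. I do not anticipate any substantial obstacle: the only points requiring care are verifying that $T_u$ never leaves $L_2$ (which is exactly where the assumption that the operators are defined on all of $L_2$, rather than on an extended space, does its work and lets us dispense with well-posedness and causality hypotheses) and that the contraction constant is \emph{strictly} below one. The latter is precisely the hypothesis $\gamma_1 \gamma_2 < 1$, and its strictness is essential, since it is what simultaneously guarantees existence and uniqueness through the contraction argument.
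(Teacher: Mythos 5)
Your proof is correct and follows essentially the same route as the paper's: reduce the feedback equations to the fixed-point problem $e = u - H_2(H_1(e))$, verify the map is a contraction with constant $\gamma_1\gamma_2 < 1$ using the two incremental gain bounds, and invoke the Banach fixed-point theorem on $L_2$, recovering $y = H_1(e)$. (Your inequality chain, applying $\gamma_2$ first and then $\gamma_1$, is in fact stated more cleanly than the paper's.)
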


\begin{proof}
    Fix $u \in L_2$.  Substituting \eqref{eq:feedback_two} in \eqref{eq:feedback_one}
    gives $e = u - H_2(H_1(e))$. Define
    $K_{u}(x) := u - H_2(H_1(x))$.  Since $H_1, H_2:L_2 \to L_2$, $K_{u}:
     L_2 \to L_2$. We claim that $K_{u}$ is a contraction on $L_2$.  Indeed, letting $x, \bar{x} \in L_2$, we have
    \begin{IEEEeqnarray*}{rCl}
        \norm{K_{u}(x) - K_{u}(\bar{x})} &=&  \norm{u - H_2(H_1(x)) - u +
        H_2(H_1(\bar{x}))}\\
        &=& \norm{H_2(H_1(\bar{x}) - H_2 (H_1(x))}\\
&\leq& \gamma_1 \norm{H_2(\bar{x}) - H_2(x)}\\
&\leq& \gamma_1\gamma_2 \norm{x - \bar{x}}.
    \end{IEEEeqnarray*}
Therefore, by the Banach fixed point theorem, there exists a unique solution to $e = K_{u}(e)$  for
each $u \in L_2$.  Furthermore, we have $y = H_1(e)$, so existence and uniqueness of
$y$ is guaranteed.
\end{proof}

\subsection{Scaled Relative Graphs}

The Scaled Relative Graph (SRG) is a graphical representation of the gain and phase
of an operator.  Phase is given by the angle between two signals.  For $u, y \in
L_2$, this  is defined as
\begin{IEEEeqnarray*}{rCl}
        \angle(u, y) := \arccos \frac{\Re \ip{u}{y}}{\norm{u}\norm{y}} \in [0, \pi]. 
\end{IEEEeqnarray*}

We define the SRG for an arbitrary relation, allowing us to talk about the SRG of an
operator and its relational inverse on the same footing.
Let $R \subseteq L_2 \times L_2$.  We write $u \in R(y)$ if $(u, y) \in R$.  Given $u_1, u_2 \in
L_2$, $u_1 \neq u_2$, define the set of complex numbers $z_R(u_1, u_2)$ by
\begin{IEEEeqnarray*}{rCl}
        z_R(u_1, u_2) \coloneqq &&\left\{\frac{\norm{y_1 - y_2}}{\norm{u_1 - u_2}} e^{\pm j\angle(u_1 -
u_2, y_1 - y_2)}\right.\\&&\bigg|\; y_1 \in R(u_1), y_2 \in R(u_2) \bigg\}.
\end{IEEEeqnarray*}
If $u_1 = u_2$ and there are corresponding
outputs $y_1 \neq y_2$, then
$z_R(u_1, u_2)$ is defined to be $\{\infty\}$.  If $R$ is single valued at $u_1$,
$z_R(u_1, u_1)$ is the empty set.
The \emph{Scaled Relative Graph} (SRG) of $R$ is then given by
\begin{IEEEeqnarray*}{rCl}
        \srg{R} := \bigcup_{u_1, u_2 \in\, L_2}  z_R(u_1, u_2).
\end{IEEEeqnarray*}
The SRG of an operator $H: L_2 \to L_2$ is defined to be the SRG of its relation.
We refer the reader to \autocite{Chaffey2023} for the relationship between the SRG,
the Nyquist diagram of a transfer function and the incremental disc of a static
nonlinearity, and to \autocite{Pates2021} for the relationship to the numerical range
of a linear operator.

The incremental gain of an operator (on its domain) is the maximum radius of its SRG.  The proof is
immediate from the definition of the SRG.

\begin{lemma}\label{lem:gain_bound}
    Given an operator $H: D \to L_2$, 
\begin{IEEEeqnarray*}{rCl}
\sup_{u_1, u_2 \in D} \frac{\norm{H(u_1) - H(u_2)}}{\norm{u_1 - u_2}} = \sup_{z \in \srg{H}}
    |z|.
\end{IEEEeqnarray*}
\end{lemma}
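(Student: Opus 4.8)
The plan is to unfold the definition of $\srg{H}$ and observe that, for a single-valued operator, taking the modulus of each point of the SRG discards exactly the phase factor $e^{\pm j \angle(\cdot)}$ and leaves behind the incremental ratio. The whole argument is then a matter of showing that the set of moduli appearing on the right coincides, term by term, with the set of difference quotients appearing on the left, after which equality of the two suprema is automatic.

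First I would dispense with the diagonal. Since $H$ is an operator, it is single-valued at every point of $D$, so by the definition of $z_R$ the set $z_H(u_1, u_1)$ is empty for each $u_1$. Hence $\srg{H}$ is the union of the sets $z_H(u_1, u_2)$ taken over pairs with $u_1 \neq u_2$, and the supremum on the left-hand side is effectively over the same pairs (the ratio being undefined, or vacuous, when $u_1 = u_2$). Next I would compute the modulus of a generic element: for $u_1 \neq u_2$, every $z \in z_H(u_1, u_2)$ has the form
\begin{IEEEeqnarray*}{rCl}
z = \frac{\norm{H(u_1) - H(u_2)}}{\norm{u_1 - u_2}}\, e^{\pm j \angle(u_1 - u_2,\, H(u_1) - H(u_2))},
\end{IEEEeqnarray*}
and since $\bigl| e^{\pm j\theta} \bigr| = 1$, its modulus is precisely $\norm{H(u_1) - H(u_2)} / \norm{u_1 - u_2}$. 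Thus the set of moduli $\{\, |z| : z \in \srg{H} \,\}$ is identical to the set of incremental ratios $\{\, \norm{H(u_1) - H(u_2)}/\norm{u_1 - u_2} : u_1, u_2 \in D,\ u_1 \neq u_2 \,\}$, and taking the supremum of each of these two equal sets yields the claimed identity.

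The main obstacle here is essentially bookkeeping rather than mathematics: the only care required is in matching the two index sets, which amounts to checking that the diagonal $u_1 = u_2$ contributes nothing to either side, and in noting that the unit-modulus phase factor is immaterial once we pass to $|z|$. This is exactly why the statement can be asserted as immediate from the definition of the SRG.
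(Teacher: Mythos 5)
Your proof is correct and takes the same route as the paper, which simply declares the result ``immediate from the definition of the SRG''; you have merely spelled out that unwinding, including the observation that single-valuedness makes the diagonal sets $z_H(u_1,u_1)$ empty and that the unit-modulus phase factor disappears under $|\cdot|$. No gap to report.
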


Given an interconnection of operators, the SRG of the interconnection can be
estimated from the SRGs of the individual operators.  We recall a couple of necessary
results in the following lemma, and refer the reader to \autocite{Ryu2021} for the full theory.

An SRG $\mathcal{G}$ is said to satisfy the \emph{chord property} if, for each $z \in
\mathcal{G}$, $\lambda z + (1-\lambda)\bar{z} \in \mathcal{G}$ for all $\lambda \in
[0, 1]$.
\begin{lemma}\label{lem:interconnections}
    Given $A, B \subseteq L_2 \times L_2$, we have:
    \begin{enumerate}
        \item $\srg{A^{-1}} = \{ z^{-1} \; | \; z \in
    \srg{A} \}$;\label{lem:inversion}.
    \item if $\bar{A} \supseteq \srg{A}$ is any set satisfying the chord property, then 
    $\srg{A + B} \subseteq \bar{A} + \srg{B}$.\label{lem:summation}
    \end{enumerate}
\end{lemma}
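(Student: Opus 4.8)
The plan is to handle the two parts separately, in each case unfolding the definition of the SRG through the ``slope'' numbers $z_R(u_1,u_2)$.

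For part (\ref{lem:inversion}) I would argue directly from the definitions. Since $A^{-1}$ swaps the two coordinates, an element of $\srg{A^{-1}}$ arising from an input pair $(y_1,y_2)$ with outputs $(u_1,u_2)$ is $\frac{\norm{u_1-u_2}}{\norm{y_1-y_2}}e^{\pm j\angle(y_1-y_2,\,u_1-u_2)}$, where $(u_i,y_i)\in A$, while the corresponding element of $\srg{A}$ is $z=\frac{\norm{y_1-y_2}}{\norm{u_1-u_2}}e^{\pm j\angle(u_1-u_2,\,y_1-y_2)}$. Because $\Re\ip{u}{y}$ is symmetric in its arguments, the angle is symmetric, so $\angle(u_1-u_2,y_1-y_2)=\angle(y_1-y_2,u_1-u_2)$, and taking the reciprocal of $z$ inverts the modulus and flips the sign of the argument, which the $\pm$ absorbs; thus the two SRG elements are reciprocals. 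The only care needed is with the boundary values: a point where $A$ is multivalued (contributing $\infty$) is exactly a point where $A^{-1}$ sends two distinct inputs to a common output (contributing $0$), and conversely, consistent with $\infty^{-1}=0$ and $0^{-1}=\infty$.

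For part (\ref{lem:summation}) the key device is to decompose output differences relative to the input difference. Fix $u_1,u_2$ and pick $y_i\in A(u_i)$, $w_i\in B(u_i)$, so that $y_i+w_i\in(A+B)(u_i)$; write $\Delta u=u_1-u_2$, $\Delta y=y_1-y_2$, $\Delta w=w_1-w_2$. Working in the real inner product $\langle\langle\cdot,\cdot\rangle\rangle:=\Re\ip{\cdot}{\cdot}$, I would split $\Delta y$ and $\Delta w$ into parts parallel and orthogonal to $\Delta u$. After normalising by $\norm{\Delta u}$, the SRG element of $A$ is $y_\parallel\pm jp$ and that of $B$ is $w_\parallel\pm jq$, where $y_\parallel,w_\parallel$ are the signed parallel parts and $p,q$ the orthogonal magnitudes, whereas the SRG element of $A+B$ is $(y_\parallel+w_\parallel)\pm js$ with $s=\norm{y_\perp+w_\perp}/\norm{\Delta u}$. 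Parallel components add exactly, which fixes the real part, but orthogonal components need not, so $s$ is the genuine obstruction.

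The heart of the argument, and the step I expect to be the main obstacle, is reconciling these orthogonal magnitudes: the triangle inequality gives only $|p-q|\le s\le p+q$, so $s$ is in general neither $p+q$ nor $|p-q|$. To absorb the slack I would choose the $B$-term to be the endpoint $w_\parallel+jq\in\srg{B}$ (for the $+$ branch) and ask the residual $a:=(y_\parallel+w_\parallel+js)-(w_\parallel+jq)=y_\parallel+j(s-q)$ to lie in $\bar A$. Since $\srg{A}\ni y_\parallel\pm jp$ and $\bar A$ satisfies the chord property, $\bar A$ contains the whole vertical segment $\{y_\parallel+jt:|t|\le p\}$; the two-sided bound $|s-q|\le p$, which follows from $s\le p+q$ and $s\ge|p-q|\ge q-p$, places $a$ on this chord, giving $z_{A+B}=a+b\in\bar A+\srg{B}$. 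The $-$ branch is handled symmetrically by taking $w_\parallel-jq$ and $a=y_\parallel-j(s-q)$. Finally I would dispose of the degenerate $\infty$ values by noting that multivaluedness of $A+B$ at a point forces multivaluedness of $A$ or $B$ there, so those points are already accounted for on the right-hand side.
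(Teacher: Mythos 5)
Your proposal is correct and takes essentially the same route as the paper: your parallel/orthogonal decomposition for part (\ref{lem:summation}), with real parts adding exactly and the triangle-inequality bound $|s-q|\le p$ absorbed by the vertical chord between $w_\parallel + jp$-type points and their conjugates, is precisely the paper's ``direct calculation'' showing $\Re(w)=\Re(w_A)+\Re(w_B)$ and $w \in w_B + [w_A, \overline{w_A}] \subseteq \srg{B} + \bar{A}$. The only difference is that for part (\ref{lem:inversion}) the paper simply cites \autocite[Theorem 5]{Ryu2021}, whereas you unfold the same definitional computation (symmetry of the angle, reciprocal modulus, conjugation absorbed by the $\pm$) explicitly.
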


\begin{proof}
    The property \ref{lem:inversion} is proved in \autocite[Theorem 5]{Ryu2021}.  The
    proof of Property \ref{lem:summation} proceeds as follows. Let $(u_1, y_{A}),
    (u_2,z_A) \in A, (u_1, y_B), (u_2, z_B) \in B$.   Then $(u_1, y_A +
    y_B), (u_2, z_A + z_B) \in A + B$, and
    \begin{IEEEeqnarray*}{lCl}
        w = \frac{\norm{y_A + y_B - z_A - z_B}}{\norm{u_1 - u_2}}\\
\exp(j \angle(y_A + y_B - z_A - z_B, u_1 - u_2)) \in \srg{A + B},\\
w_A = \frac{\norm{y_A - z_A}}{\norm{u_1 - u_2}} \exp(j \angle{y_A - z_A, u_1 - u_2}) \in \srg{A},\\
w_B = \frac{\norm{y_B - z_B}}{\norm{u_1 - u_2}} \exp(j \angle{y_B - z_B, u_1 - u_2}) \in \srg{B}.\\
    \end{IEEEeqnarray*}
Then, by direct calculation, $\Re(w) = \Re(w_A) + \Re(w_B)$ and $\Im(w_B) - \Im(w_A)
\leq \Im(w) \leq \Im(w_B) + \Im(w_A)$, that is, $w \in w_B + [w_A, \overline{w_A}]$.
Since $[w_A, \overline{w_A}] \subseteq \bar{A}$, the claim follows.
\end{proof}
\section{Incremental homotopy}\label{sec:homotopy}

The following theorem gives a method for verifying finite incremental gain of a
feedback interconnection using a homotopy from a known incrementally bounded
operator.    The proof proceeds by
using the incremental small gain theorem to show small perturbations in the feedback
preserve stability, and applies this idea inductively to scale the feedback from $0$
to $1$. 

\begin{theorem}[Incremental homotopy]\label{thm:homotopy_incremental}
    Let $H_1, H_2: L_2 \to L_2$ be operators such that
    \begin{enumerate}[label=(\roman*)]
        \item  $H_1, H_2$  have finite incremental gain;
        \item  there exists $\gamma >0$ such that, for all $\tau \in [0, 1]$ and all
        $u_1, u_2 \in \dom{[H_1, \tau H_2]}$, we have
        \begin{IEEEeqnarray*}{rCl}
            \norm{y_1 - y_2} \leq \gamma \norm{u_1 - u_2},
        \end{IEEEeqnarray*}
        where $y_i = [H_1, \tau H_2](u_i), i=1,2$.\label{cond:bound}
    \end{enumerate} 
    Then $\dom{[H_1, H_2]} = L_2$ and $[H_1, H_2]$ has an incremental
    gain bound of $\gamma$.
\end{theorem}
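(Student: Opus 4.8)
The plan is to prove the statement by a finite induction along a partition of the homotopy interval $[0,1]$, increasing the scaling of $H_2$ in uniformly small steps and using the incremental small gain theorem (Theorem~\ref{thm:inc_small_gain}) together with the decomposition of Lemma~\ref{lem:feedback_identity} to propagate both full-domainedness and the gain bound across each step. Set $\gamma_2 := \norm{H_2}_\Delta$, which is finite by assumption (i). If $\gamma_2 = 0$ then $H_2$ is constant, so $[H_1, H_2]$ is defined on all of $L_2$ by direct substitution, and assumption~\ref{cond:bound} furnishes the gain bound immediately; so I would assume $\gamma_2 > 0$ and fix once and for all a step size $\nu$ with $0 < \nu < 1/(\gamma\gamma_2)$, which is possible since $\gamma > 0$ by hypothesis.

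The induction hypothesis at a parameter value $\tau \in [0,1]$ is that $\dom{[H_1, \tau H_2]} = L_2$ and that $[H_1, \tau H_2]$ has incremental gain at most $\gamma$; the second part is immediate from assumption~\ref{cond:bound}, which bounds exactly the incremental ratio, once the domain is known to be all of $L_2$. The base case $\tau = 0$ holds because $[H_1, 0]$ is just $H_1$, an operator mapping $L_2$ into $L_2$, whose gain bound is the $\tau = 0$ instance of assumption~\ref{cond:bound}.

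For the inductive step, suppose the hypothesis holds at $\tau$ and pass to $\tau + \nu$, shrinking $\nu$ on the final step so that $\tau + \nu \leq 1$ (which only decreases $\nu\gamma_2$ and so preserves the small gain inequality below). By Lemma~\ref{lem:feedback_identity},
\begin{IEEEeqnarray*}{rCl}
[H_1, (\tau + \nu)H_2] = [[H_1, \tau H_2], \nu H_2].
\end{IEEEeqnarray*}
By the induction hypothesis the inner operator $[H_1, \tau H_2]$ is a genuine operator from $L_2$ to $L_2$ with incremental gain at most $\gamma$, while $\nu H_2 : L_2 \to L_2$ has incremental gain $\nu\gamma_2$. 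Since $\gamma\cdot\nu\gamma_2 < 1$, Theorem~\ref{thm:inc_small_gain} applies to this interconnection and produces, for every $u \in L_2$, unique signals satisfying the feedback equations; hence $[[H_1, \tau H_2], \nu H_2]$, and therefore $[H_1, (\tau + \nu)H_2]$, has domain all of $L_2$. Applying assumption~\ref{cond:bound} at $\tau + \nu$ — now legitimately, since the domain is $L_2$ — yields the incremental gain bound $\gamma$, closing the induction.

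Because $\nu$ is fixed and positive, the partition $0, \nu, 2\nu, \ldots, 1$ is finite, so after finitely many steps the induction reaches $\tau = 1$ and gives $\dom{[H_1, H_2]} = L_2$ with incremental gain bound $\gamma$. The crux of the argument — and the reason assumption~\ref{cond:bound} is stated with a \emph{single} $\gamma$ uniform in $\tau$ — is exactly that this uniformity allows the step size $\nu$ to be chosen globally: were the bound permitted to blow up as $\tau \to 1$, the admissible step sizes would shrink to zero and the induction could stall short of $\tau = 1$. Ensuring the step size is bounded below is the main point to get right; the remaining estimates are routine applications of the two cited results.
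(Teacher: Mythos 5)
Your proof is correct and follows essentially the same route as the paper's: a finite induction in steps of fixed size $\nu < 1/(\gamma\gamma_2)$, using Lemma~\ref{lem:feedback_identity} to rewrite $[H_1, (\tau+\nu)H_2]$ as $[[H_1, \tau H_2], \nu H_2]$ and Theorem~\ref{thm:inc_small_gain} to extend the domain to all of $L_2$ at each step, with the uniformity of $\gamma$ in condition~\ref{cond:bound} guaranteeing the step size is bounded below. The only cosmetic differences are that you anchor the induction at $\tau = 0$ (so every step uses the bound $\gamma$, where the paper's first step instead uses the incremental gain $\gamma_1$ of $H_1$ and $\nu < 1/(\gamma_1\gamma_2)$) and that you explicitly dispatch the degenerate case $\gamma_2 = 0$, which the paper leaves implicit.
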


\begin{proof}
    Let $\gamma_1, \gamma_2$ be incremental gain bounds for $H_1$ and $H_2$
    respectively.  We begin by showing that there exists $\nu > 0$ such that $\dom{[H_1,
    \nu H_2]} = L_2$.  Indeed, setting $\nu < 1/(\gamma_1\gamma_2)$, we
    have that $\nu \gamma_1 \gamma_2 < 1$, so it follows from
    Theorem~\ref{thm:inc_small_gain} that $\dom{[H_1, \nu H_2]} = L_2$.

    By assumption, $\gamma$ is an incremental gain bound
    for $[H_1, \nu H_2]$.  It then follows from Theorem~\ref{thm:inc_small_gain} that,
    for all $\tau \in [0, 1/\gamma\gamma_2)$, $\dom{[[H_1, \nu H_2], \tau H_2]} = L_2$.  From Lemma~\ref{lem:feedback_identity}, $[[H_1, \nu H_2], \tau H_2] =
    [H_1, (\nu + \tau) H_2]$.  Again, by assumption, this operator has an incremental
    gain bound of $\gamma$.

    Proceeding inductively, we have that $\dom{[H_1, (\nu + k\tau) H_2]} = L_2$
    for all $\tau \in [0, 1/\gamma\gamma_2)$ and positive integers $k$ such that
    $\nu + k\tau \leq 1$, so, in particular, $\dom{[H_1, H_2]} = L_2$.  The
    incremental gain bound of $\gamma$ then follows from Condition~(\ref{cond:bound})
    in the theorem statement.
\end{proof}

We note that \eqref{cond:bound} implies single-valuedness of $[H_1, \tau H_2]$ for
all $\tau \in [0, 1]$.

Theorem~\ref{thm:homotopy_incremental} allows incremental stability of a feedback
interconnection to be verified using only information about $L_2$ signals, without
any reference to an extended space containing unbounded signals.  This is useful
as it allows the use of operator theoretic tools to verify the incremental gain bound of
condition~\ref{cond:bound}.  In the following section, we show this may be done
graphically using the SRG, and in Section~\ref{sec:IQC} we verify the incremental
gain bound using IQCs.

\section{SRG separation}\label{sec:SRG}

Given an SRG $\mathcal{G}$, let $\overline{\mathcal{G}}$ denote the smallest SRG
containing $\mathcal{G}$ and satisfying the chord property.  We begin with a
graphical condition which guarantees finite incremental gain of a feedback
interconnection, on its domain (which may not, in general, be all of $L_2$).
Given two regions $X_1, X_2$ in the extended complex plane, we let
$\operatorname{dist}(X_1, X_2)$ denote $\inf_{x_1 \in X_1, x_2 \in X_2} |x_1 - x_2|$.

\begin{lemma}\label{lem:separation_bound}
    Let $H_1, H_2$ be operators, and suppose there exists $\gamma > 0$ such that
    $\operatorname{dist}(\srg{H_1}^{-1}, -\overline{\srg{H_2}}) \geq 1/\gamma$. Then, for
    any $u_i \in \dom{[H_1, H_2]}$ and $y_i = (H_1^{-1} + \tau H_2)^{-1}(u_i)$, we have 
    $\norm{y_1 - y_2} \leq \gamma \norm{u_1 - u_2}$.
\end{lemma}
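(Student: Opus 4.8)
The plan is to reduce the claimed incremental gain bound to a statement about the maximum modulus of the SRG of the feedback operator, and then to control that modulus by separating the SRGs of $H_1^{-1}$ and $H_2$ in the complex plane. First I would invoke the identity $[H_1, H_2] = (H_1^{-1} + H_2)^{-1}$ together with Lemma~\ref{lem:gain_bound}, which tells me that the incremental gain of $[H_1, H_2]$ over its domain equals $\sup_{w \in \srg{[H_1, H_2]}} |w|$. It therefore suffices to show this supremum is at most $\gamma$, since that is exactly the assertion $\norm{y_1 - y_2} \leq \gamma \norm{u_1 - u_2}$ for all $u_i \in \dom{[H_1, H_2]}$.

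Next I would pass through the inversion. By Lemma~\ref{lem:interconnections}, property~\ref{lem:inversion}, $\srg{(H_1^{-1} + H_2)^{-1}}$ consists exactly of the reciprocals of the points of $\srg{H_1^{-1} + H_2}$. Hence $\sup_{w \in \srg{[H_1, H_2]}} |w| = 1 / \operatorname{dist}(\{0\}, \srg{H_1^{-1} + H_2})$, so proving the gain bound $\gamma$ is equivalent to showing $\operatorname{dist}(\{0\}, \srg{H_1^{-1} + H_2}) \geq 1/\gamma$.

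To estimate this distance I would apply the summation property, Lemma~\ref{lem:interconnections}, property~\ref{lem:summation}, with $\overline{\srg{H_2}}$ --- the smallest chord-property SRG containing $\srg{H_2}$ --- in the role of $\bar A$, giving the inclusion $\srg{H_1^{-1} + H_2} \subseteq \overline{\srg{H_2}} + \srg{H_1^{-1}}$. Since passing to a superset can only decrease the distance to the origin, it remains to bound $\operatorname{dist}(\{0\}, \overline{\srg{H_2}} + \srg{H_1^{-1}})$ from below. The final step is the elementary observation that the distance from the origin to a Minkowski sum $X + Y$ equals $\inf_{x \in X, y \in Y} |x + y| = \operatorname{dist}(Y, -X)$; applied with $X = \overline{\srg{H_2}}$ and $Y = \srg{H_1^{-1}}$, and noting $\srg{H_1^{-1}} = \srg{H_1}^{-1}$ by the inversion property, this identifies the bound with the hypothesis $\operatorname{dist}(\srg{H_1}^{-1}, -\overline{\srg{H_2}}) \geq 1/\gamma$. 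Chaining the inequalities closes the argument.

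I expect the main obstacle to be bookkeeping at the boundaries rather than any deep step: I must check that $\overline{\srg{H_2}}$ is legitimately admissible in the summation property (it is, being by definition a chord-property SRG containing $\srg{H_2}$), handle the possibility that $\srg{H_1^{-1} + H_2}$ contains $\infty$ or is empty so that the reciprocal-and-distance manipulations remain valid in the extended complex plane, and confirm that all suprema and infima are taken over $\dom{[H_1, H_2]}$ rather than all of $L_2$, so that the estimate is genuinely a statement about the feedback operator on its (possibly proper) domain.
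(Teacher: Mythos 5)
Your proof is correct and follows essentially the same route as the paper's: the paper likewise chains the inversion rule, the chord-property sum rule applied to $\overline{\srg{H_2}}$, and Lemma~\ref{lem:gain_bound}, relegating your explicit Minkowski-sum identity $\operatorname{dist}(\{0\}, X+Y) = \operatorname{dist}(Y, -X)$ to a geometric figure. Your written-out version of that final step (and your attention to the empty-SRG and point-at-infinity edge cases) merely makes explicit what the paper's picture asserts.
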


\begin{proof}
    Since $\overline{\srg{H_2}^{-1}}$ satisfies the chord property by construction,
    we can apply the SRG sum rule.  The result then follows from
    Lemmas~\ref{lem:gain_bound} and~\ref{lem:interconnections} and the following
    geometry.

    \begin{center}
    \includegraphics[width=\linewidth]{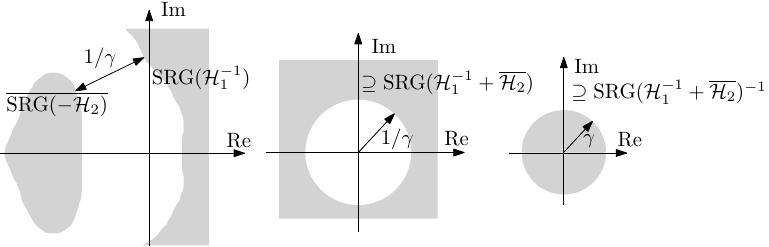}
    \end{center}
\end{proof}

The separation condition of Lemma~\ref{lem:separation_bound} guarantees that an
operator has an incremental bound on its domain.  Guaranteeing that this domain is,
in fact, $L_2$, requires a stronger separation property: the SRGs
must remain separated as the feedback is gradually increased from zero.  We formalize
this separation property as follows.

\begin{definition}
    Two operators $H_1$ and $H_2$ are said to have \emph{strictly separated SRGs with
    margin $r_{\min} > 0$} if
    $\operatorname{dist}(\srg{H_1}^{-1}, -\tau\overline{\srg{H_2}}) \geq r_{\min}$ for all $\tau
    \in (0, 1]$.
\end{definition}

\begin{corollary}\label{cor:SRG_stability}
    Suppose $H_1, H_2: L_2 \to L_2$  have bounded incremental gain and strictly separated SRGs. Then the feedback
    interconnection $[H_1, H_2]$ maps $L_2$ to $L_2$ and has bounded incremental
    gain.
\end{corollary}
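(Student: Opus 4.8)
The plan is to verify the two hypotheses of the incremental homotopy theorem (Theorem~\ref{thm:homotopy_incremental}) and then apply it directly. Hypothesis~(i) is immediate, since it is exactly the assumption that $H_1$ and $H_2$ have bounded incremental gain; call these bounds $\gamma_1$ and $\gamma_2$. All of the work lies in producing a single constant $\gamma$ that serves as a uniform incremental gain bound for the family $[H_1, \tau H_2]$ over all $\tau \in [0,1]$, which is precisely what condition~\ref{cond:bound} demands.

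For $\tau \in (0,1]$ I would apply Lemma~\ref{lem:separation_bound} to the pair $(H_1, \tau H_2)$. The key elementary fact is that scalar multiplication by a positive $\tau$ scales the SRG radially without altering angles, so $\overline{\srg{\tau H_2}} = \tau\,\overline{\srg{H_2}}$. The strict separation hypothesis, with margin $r_{\min}$, then reads $\operatorname{dist}(\srg{H_1}^{-1}, -\tau\,\overline{\srg{H_2}}) \geq r_{\min}$, which is exactly the separation hypothesis of Lemma~\ref{lem:separation_bound} with $1/\gamma = r_{\min}$. The lemma therefore yields, for each $\tau \in (0,1]$ and each pair of inputs in $\dom{[H_1, \tau H_2]}$, the incremental bound $\norm{y_1 - y_2} \leq (1/r_{\min})\norm{u_1 - u_2}$. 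The endpoint $\tau = 0$ falls outside the strict separation definition, so I would treat it separately: the feedback equations collapse to $[H_1, 0] = H_1$, which has incremental gain $\gamma_1$. Setting $\gamma := \max\{\gamma_1,\, 1/r_{\min}\}$ thus gives a uniform incremental gain bound on $\dom{[H_1, \tau H_2]}$ valid for all $\tau \in [0,1]$, establishing condition~\ref{cond:bound}.

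With both hypotheses in hand, Theorem~\ref{thm:homotopy_incremental} gives $\dom{[H_1, H_2]} = L_2$ and the incremental gain bound $\gamma$, which is the claim. I expect the only genuine subtlety to be conceptual rather than computational: Lemma~\ref{lem:separation_bound} asserts only a bound on the \emph{a priori} possibly proper domain $\dom{[H_1, \tau H_2]}$, and it is the homotopy theorem that bootstraps this into the statement that the domain is all of $L_2$. The apparent tension between ``a bound holding on the domain'' and ``the domain being everything'' is resolved entirely inside Theorem~\ref{thm:homotopy_incremental}, through its inductive scaling from $\nu$ up to $1$, so no additional analysis is required here once the separation bound has been supplied uniformly in $\tau$.
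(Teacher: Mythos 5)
Your proposal is correct and follows essentially the same route as the paper's own (very terse) proof: strict separation plus Lemma~\ref{lem:separation_bound} supplies the uniform incremental gain bound required by condition~\ref{cond:bound}, and Theorem~\ref{thm:homotopy_incremental} does the rest. You in fact spell out details the paper glosses over --- the radial scaling identity $\overline{\srg{\tau H_2}} = \tau\,\overline{\srg{H_2}}$, the uniformity of the bound over $\tau \in (0,1]$, and the endpoint $\tau = 0$ handled via $\gamma := \max\{\gamma_1, 1/r_{\min}\}$ --- all of which are sound.
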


\begin{proof}
    It follows from strict separation of the SRGs of $H_1$ and $H_2$, and
    Lemma~\ref{lem:separation_bound}, that $1/r_{\min}$ is an incremental gain bound
    for $[H_1, \nu H_2]$.  The result then follows from
    Theorem~\ref{thm:homotopy_incremental}.
\end{proof}

Corollary~\ref{cor:SRG_stability} corrects the assumptions of \autocite[Theorem
2, Corollary 1, Corollary 2]{Chaffey2023} in two ways: in
\autocite{Chaffey2023}, only $H_1$ was was assumed to be incrementally bounded, and
the separation of the SRGs was not required to be strict.
Strict separation must also be assumed in \autocite[Theorem 1]{Chaffey2023}.  The additional assumptions are
satisfied in all the examples of \autocite{Chaffey2023}.  Furthermore, the recent
result of \autocite[Theorem 1]{Chen2024} removes these two assumptions, but
at the price of a weaker form of incremental stability.
The following example illustrates why strict separation of the SRGs
is required to guarantee incremental boundedness.

\begin{example}
    Consider the operator $N:L_2 \to L_2$ given by
    \begin{IEEEeqnarray*}{rCl}
    (Nu)(t) = \phi(u(t))
    \end{IEEEeqnarray*}
    where $\phi(x) = -\arctan(x)$.
    $\phi$ is $1$-Lipschitz continuous, so $N$ has an incremental gain bound of
    $1$.  It follows that its SRG is contained in the closed unit disc.  We now
    observe the following: the SRG of $N$ is, in fact, contained in the open unit
    disc $\mathbb{D} := \{z \in \C\; | \; |z| < 1\}$, and therefore does not contain 
    the point $-1$.  However, the operator
    formed by putting $N$ in unity gain negative feedback has infinite incremental
    gain.

    To see that $\srg{N} \subseteq \mathbb{D}$, suppose that there exist $u_1, u_2 \in
    L_2$, $u_1 \neq u_2$, such that
    \begin{IEEEeqnarray}{rCl}
    \frac{\norm{N(u_1) - N(u_2)}}{\norm{u_1 - u_2}} = 1. \label{eq:example_1}
    \end{IEEEeqnarray}
    Since $u_1 \neq u_2$, the set $E = \{t \in \R \; | \; u_1(t) \neq u_2(t) \}$ has positive
    measure.  Equation~\eqref{eq:example_1} implies
    \begin{IEEEeqnarray*}{rCl}
    \int_E |u_1(t) - u_2(t)|^2 \dd{t} = \int_E |\phi(u_1(t)) - \phi(u_2(t))|^2 \dd{t}\\
                                  = \int_E \left|\frac{\phi(u_1(t)) -
                              \phi(u_2(t))}{u_1(t) - u_2(t)}\right|^2 |u_1(t) - u_2(t)|^2
                              \dd{t},
    \end{IEEEeqnarray*}
    or equivalently
    \begin{IEEEeqnarray*}{rCl}
        \int_E \left(1 - \left|\frac{\phi(u_1(t)) -
                              \phi(u_2(t))}{u_1(t) - u_2(t)}\right|^2\right)|u_1(t) - u_2(t)|^2
                              \dd{t} = 0.
    \end{IEEEeqnarray*}
    Hence we must have
    \begin{IEEEeqnarray*}{rCl}
        \left|\frac{\phi(u_1(t)) - \phi(u_2(t))}{u_1(t) - u_2(t)}\right| = 1
    \end{IEEEeqnarray*}
    for almost all $t \in E$.  But this is impossible, since 
    \begin{IEEEeqnarray*}{rCl}
\left|\frac{\phi(x) - \phi(y)}{x-y}\right| < 1
    \end{IEEEeqnarray*}
    for any $x, y \in \R$ with $x \neq y$.  Thus indeed $\srg{N} \subseteq \mathbb{D}$.

    We now show the $(N^{-1} + I)^{-1}$ does not have a finite incremental gain.  Indeed, on its
    domain, we have
    \begin{IEEEeqnarray*}{rCl}
    (N^{-1} + I)^{-1}(u)(t) = \psi^{-1}(u(t)),
    \end{IEEEeqnarray*}
    where $\psi(x) = x - \tan(x)$.  However, the function $\psi^{-1}$ is not
    Lipschitz continuous at $x = 0$, since $\psi^\prime(0) = 0$ so
    $(\psi^{-1})^\prime(0)$ does not exist.  The ratio
    \begin{IEEEeqnarray*}{rCl}
    \frac{\norm{(N^{-1} + I)^{-1}(u_1) - (N^{-1} + I)^{-1}(u_2)}}{\norm{u_1 - u_2}}
    \end{IEEEeqnarray*}
    can be made arbitrarily large, for example by taking $u_1(t) =
    au(t)$, $u_2(t) = u(t)$ for $a, b \neq 0$ small
    enough, where 
    \begin{equation*}
\begin{gathered}[b]
        u(t) := \begin{cases}
            1 & t \in [0, 1]\\
            0 & \text{otherwise.}
        \end{cases}
\\
\end{gathered}\qedhere
    \end{equation*}
\end{example}

\section{Incremental IQCs}\label{sec:IQC}

In this section, we give a second method of verifying condition~\ref{cond:bound} of
Theorem~\ref{thm:homotopy_incremental}: satisfaction of an incremental IQC.  This
gives a incremental version of the classical IQC stability theorem 
\autocite[Theorem 1]{Megretski1997}.  Our theorem is closely related to \autocite[Theorem
7.40]{Scherer2015}, but does not rely on an extended space or assumptions of
causality.  The developments closely follow the non-incremental theory of
\autocite{Rantzer1997, Freeman2022}.

\begin{corollary}\label{cor:IQC}
    Let $H_1: L_2 \to L_2$ be a bounded LTI operator, and $H_2: L_2 \to L_2$ be
    incrementally bounded.  Let $\Pi: j\R \to \C^{n \times n}$ be a Hermitian-valued
    function with $L_\infty$ entries.  Given $y_1, y_2 \in L_2$, define $\Delta
    \hat{y}(j\omega) := \hat{y}_1(j\omega) - \hat{y}_2(j\omega)$ and $\Delta
    \hat{H}_2(y)(j\omega) := \hat{H}_2(y_1)(j\omega) -
    \tau \hat{H}_2(y_2)(j\omega)$. Suppose that, for every
    $\tau \in [0, 1]$ and every $y_1, y_2 \in L_2$, we have
    \begin{IEEEeqnarray}{rCl}
{\footnotesize
    \int_{-\infty}^\infty \begin{pmatrix}
        \Delta \hat{y}(j\omega) \\ \tau \Delta
    \hat{H}_2(y)(j\omega) \end{pmatrix}^*  \Pi(j\omega) \begin{pmatrix}
        \Delta \hat{y}(j\omega) \\ \tau \Delta
    \hat{H}_2(y)(j\omega)  \end{pmatrix} \dd{\omega} \geq 0,
    \label{eq:inc_IQC_1}}
    \end{IEEEeqnarray}
    and that there exists $\epsilon > 0$ such that
    \begin{IEEEeqnarray}{rCl}
        \begin{pmatrix}
            \hat{H_1}(j\omega) \\ I
        \end{pmatrix}^* \Pi(j\omega) \begin{pmatrix}
            \hat{H_1}(j\omega) \\ I
        \end{pmatrix} \leq -\epsilon I \label{eq:inc_IQC_2}
    \end{IEEEeqnarray}
    for all $\omega \in \R$.  Then $[H_1, H_2]$ maps $L_2$ to $L_2$ and has bounded
    incremental gain.
\end{corollary}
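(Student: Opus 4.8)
The plan is to deduce the corollary from the incremental homotopy theorem, Theorem~\ref{thm:homotopy_incremental}, by using the two frequency-domain inequalities \eqref{eq:inc_IQC_1}--\eqref{eq:inc_IQC_2} to verify its hypothesis~\ref{cond:bound}. First I would dispatch hypothesis~(i): since $H_1$ is a bounded LTI operator it is linear, so $H_1(u_1) - H_1(u_2) = H_1(u_1 - u_2)$ and its incremental gain equals its gain $\norm{\hat{H}_1}_\infty < \infty$; and $H_2$ is incrementally bounded by assumption, with incremental gain $\gamma_2 := \norm{H_2}_\Delta$. It then remains only to produce a single constant $\gamma > 0$, \emph{independent of $\tau$}, such that $\norm{y_1 - y_2} \le \gamma \norm{u_1 - u_2}$ whenever $u_1, u_2 \in \dom{[H_1, \tau H_2]}$ and $y_i = [H_1, \tau H_2](u_i)$. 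Theorem~\ref{thm:homotopy_incremental} then yields both $\dom{[H_1, H_2]} = L_2$ and the incremental gain bound.

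Fix $\tau \in [0,1]$ and $u_1, u_2 \in \dom{[H_1, \tau H_2]}$, with internal signals $e_i, y_i \in L_2$ solving \eqref{eq:feedback_one}--\eqref{eq:feedback_two} with $\tau H_2$ in place of $H_2$. Writing $\Delta e = e_1 - e_2$, $\Delta y = y_1 - y_2$ and $\Delta u = u_1 - u_2$, the loop equations give $\Delta e = \Delta u - \tau(H_2(y_1) - H_2(y_2))$, and since $H_1$ is LTI, $\widehat{\Delta y} = \hat{H}_1 \widehat{\Delta e}$ in the frequency domain. Because $(\widehat{\Delta y}, \widehat{\Delta e})$ lies on the graph of $\hat{H}_1$, condition~\eqref{eq:inc_IQC_2}, evaluated on $\widehat{\Delta e}$ and integrated over frequency, together with Parseval, gives an upper bound of $-\epsilon \norm{\Delta e}^2$ on the corresponding $\Pi$-quadratic form; meanwhile \eqref{eq:inc_IQC_1}, applied to the loop outputs $y_1, y_2$, gives a lower bound of $0$ on the $\Pi$-quadratic form built from $(\widehat{\Delta y}, \tau(\widehat{H_2(y_1)} - \widehat{H_2(y_2)}))$.

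The heart of the argument is to combine these two inequalities. Both quadratic forms share the same top component $\widehat{\Delta y}$, and their bottom components differ, via the loop relation $\tau(\widehat{H_2(y_1)} - \widehat{H_2(y_2)}) = \widehat{\Delta u} - \widehat{\Delta e}$, only by the external input $\widehat{\Delta u}$. Subtracting the two inequalities therefore cancels every term that is quadratic in $\widehat{\Delta y}$ and leaves $\epsilon \norm{\Delta e}^2$ bounded above by terms that are bilinear in $\widehat{\Delta u}$ against $\widehat{\Delta y}$ and $\tau(\widehat{H_2(y_1)} - \widehat{H_2(y_2)})$, plus a term quadratic in $\widehat{\Delta u}$. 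Using the $L_\infty$ bound on $\Pi$, Cauchy--Schwarz, and the estimates $\norm{\Delta y} \le \norm{\hat{H}_1}_\infty \norm{\Delta e}$ and $\tau \norm{H_2(y_1) - H_2(y_2)} \le \gamma_2 \norm{\Delta y}$ (valid since $\tau \le 1$), every such term is controlled by $\norm{\Delta u}\,\norm{\Delta e}$ or $\norm{\Delta u}^2$. This reduces the estimate to a scalar quadratic inequality $\epsilon \norm{\Delta e}^2 \le C_1 \norm{\Delta u}\,\norm{\Delta e} + C_2 \norm{\Delta u}^2$ with constants independent of $\tau$, from which $\norm{\Delta e} \le c \norm{\Delta u}$ and hence $\norm{\Delta y} \le \norm{\hat{H}_1}_\infty\, c \norm{\Delta u} =: \gamma \norm{\Delta u}$.

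The main obstacle is this combination step. Because the IQC and the frequency-domain inequality live on different signal pairs --- the output of $H_1$ paired with its input in \eqref{eq:inc_IQC_2}, versus the output of $H_1$ paired with the (scaled) output of $H_2$ in \eqref{eq:inc_IQC_1} --- the subtraction must be arranged, respecting the feedback sign convention, so that the indefinite $\widehat{\Delta y}$ cross-terms cancel \emph{exactly}, leaving only quantities dominated by $\norm{\Delta u}$ and $\norm{\Delta e}$. The strictness margin $\epsilon > 0$ is then essential both to absorb the residual $\norm{\Delta e}$ cross-term and to render the constant $\gamma$ finite and, crucially, independent of $\tau$. Once this uniform bound is in hand, Theorem~\ref{thm:homotopy_incremental} closes the argument, with no well-posedness or causality assumptions and no recourse to an extended space.
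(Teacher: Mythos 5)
Your proposal is correct and takes essentially the same route as the paper: both proofs verify condition~\ref{cond:bound} of Theorem~\ref{thm:homotopy_incremental} with a constant independent of $\tau$ by combining \eqref{eq:inc_IQC_1} with the incremental form of \eqref{eq:inc_IQC_2} (the conversion being valid precisely because the LTI operator $H_1$ is linear), and then invoke the homotopy theorem. Your direct subtraction of the two $\Pi$-quadratic forms followed by Cauchy--Schwarz and a scalar quadratic inequality in $\norm{\Delta e}$ is exactly the content of the paper's Lemmas~\ref{lem:quadratic_continuous} and~\ref{lem:IQC_bound} (quadratic continuity of $\sigma(x) = \ip{x}{x}_{\Pi}$) carried out inline, including the same sign-convention bookkeeping needed so that the two stacked signals differ only by $\Delta u$ in their second component.
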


In contrast to \autocite[Theorem 1]{Megretski1997}, Corollary~\ref{cor:IQC} does not require any well-posedness assumptions along
the homotopy path.
Before giving the proof of Corollary~\ref{cor:IQC}, we make a technical definition
and prove two lemmas, on which the proof relies.

\begin{definition}
    A functional $\sigma: L_2 \to \R$ is said to be \emph{quadratically continuous}
    if, for every $\epsilon > 0$, there exists $C > 0$ such that
    \begin{IEEEeqnarray*}{rCl}
    \sigma(y) \leq \sigma(x) + \epsilon \norm{x}^2 + C\norm{x - y}^2
    \end{IEEEeqnarray*}
    for all $x, y \in L_2$.
\end{definition}

The class of quadratically continuous functionals that we will use are characterized
in the following lemma, which is a special case of \autocite[Lem. 3.18]{Freeman2022}.

\begin{lemma}\label{lem:quadratic_continuous}
    Let $\Pi: j\R \to \C^{n \times n}$ be a Hermitian-valued function with $L_\infty$
    entries.  Then the functional $\sigma:L_2 \to \R$ defined by 
    \begin{IEEEeqnarray*}{rCl}
    \sigma(x) &:=& \ip{x}{x}_{\Pi}\\
    \ip{x}{y}_{\Pi} &:=& \Re \int_{-\infty}^\infty \hat{x}^*(j\omega) \Pi(j\omega)
    \hat{y}(j\omega)
    \dd{\omega}
    \end{IEEEeqnarray*}
    is quadratically continuous.
\end{lemma}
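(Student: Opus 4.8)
The plan is to exploit the fact that $\sigma$ is the diagonal of the symmetric bilinear form $\ip{\cdot}{\cdot}_\Pi$, and that this form is bounded precisely because $\Pi$ has $L_\infty$ entries. First I would record symmetry: since $\Pi(j\omega)$ is Hermitian, the integrand $\hat{x}^*\Pi\hat{y}$ has complex conjugate $\hat{y}^*\Pi\hat{x}$, so taking real parts gives $\ip{x}{y}_\Pi = \ip{y}{x}_\Pi$, and bilinearity is immediate from the definition. Next I would establish boundedness: applying the pointwise estimate $|\hat{x}(j\omega)^*\Pi(j\omega)\hat{y}(j\omega)| \le \|\Pi\|_\infty\, |\hat{x}(j\omega)|\,|\hat{y}(j\omega)|$ under the integral, followed by the Cauchy--Schwarz inequality in $\omega$ and Plancherel's theorem, produces a constant $M>0$ (proportional to $\|\Pi\|_\infty$) such that $|\ip{x}{y}_\Pi| \le M\norm{x}\norm{y}$ for all $x,y \in L_2$. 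This is the only place the $L_\infty$ hypothesis is used, and it is what guarantees $M$ is finite.

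The main computation is then a completion-of-squares argument. Writing $d = y - x$ and using bilinearity together with symmetry, I would expand
\[
\sigma(y) = \ip{x+d}{x+d}_\Pi = \sigma(x) + 2\ip{x}{d}_\Pi + \sigma(d).
\]
The term $\sigma(d) = \ip{d}{d}_\Pi$ is bounded above by $M\norm{d}^2 = M\norm{x-y}^2$ using the boundedness established above. The cross term is controlled by Young's inequality: for the prescribed $\epsilon > 0$,
\[
2\ip{x}{d}_\Pi \le 2M\norm{x}\norm{d} \le \epsilon\norm{x}^2 + \frac{M^2}{\epsilon}\norm{d}^2 .
\]
Collecting these estimates yields $\sigma(y) \le \sigma(x) + \epsilon\norm{x}^2 + C\norm{x-y}^2$ with $C = M^2/\epsilon + M$, which is exactly quadratic continuity.

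I do not expect a serious obstacle: the argument is at heart the elementary inequality $2ab \le \epsilon a^2 + \epsilon^{-1}b^2$ dressed up in the $\Pi$-weighted inner product, with the key feature that the $\epsilon$ is attached to $\norm{x}^2$ (forcing the cross term to be split asymmetrically) while all of the $\norm{x-y}^2$ contributions are absorbed into $C$. The only point needing care is the boundedness constant $M$, where Plancherel's theorem converts the frequency-domain norms $\norm{\hat{x}}$ back into the $L_2$ norms $\norm{x}$ appearing in the definition of quadratic continuity; the exact Fourier normalization constant is harmless and can be folded into $M$. Since the statement is cited as a special case of \autocite[Lem. 3.18]{Freeman2022}, I would keep the write-up short and self-contained rather than deriving it from the general result.
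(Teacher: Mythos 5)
Your proposal is correct and follows essentially the same route as the paper's proof: expand the quadratic form along $d = y-x$ using bilinearity and Hermitian symmetry, bound the form by $M\norm{x}\norm{y}$ via the $L_\infty$ hypothesis, and absorb the cross term with the Young-type inequality $2M\norm{x}\norm{d} \leq \epsilon\norm{x}^2 + \epsilon^{-1}M^2\norm{d}^2$, arriving at the identical constant $C = M + M^2/\epsilon$. Your write-up is in fact cleaner than the paper's, which contains a couple of sign typos (it writes $\sigma(x)-\sigma(y)$ for $\sigma(y)-\sigma(x)$ and flips an inequality direction) that your version avoids.
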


\begin{proof}
    Note that $\ip{\cdot}{\cdot}_{\Pi}$ defines a bounded Hermitian form.
   \begin{IEEEeqnarray*}{rCl}
   \sigma(x) - \sigma(y) &=& \ip{y}{y}_{\Pi} - \ip{x}{x}_{\Pi}\\
                         &=& \ip{y}{y}_{\Pi} - \ip{x}{y}_{\Pi} + \ip{x}{y}_{\Pi} - \ip{x}{x}_{\Pi}\\
                         &=& \ip{y-x}{y}_{\Pi} - \ip{x}{y-x}_{\Pi}\\
                         &=& \ip{y-x}{y}_{\Pi} - \ip{y-x}{x}_{\Pi}\\&& +
                         \ip{y-x}{y}_{\Pi} + \ip{x}{y-x}_{\Pi}\\
                         &=& \ip{y-x}{y - x}_{\Pi} + \ip{y-x}{x}_{\Pi} +
                         \ip{x}{y-x}_{\Pi}.
   \end{IEEEeqnarray*} 
   Since $\ip{\cdot}{\cdot}_{\Pi}$ is bounded, there exists $M \geq 0$ such that $\ip{x}{y}
   \leq M \norm{x}\norm{y}$ for all $x, y \in L_2$.  We therefore have
   \begin{IEEEeqnarray}{rCl}
   \sigma(x) - \sigma(y) &\leq& M\norm{x-y}^2 +
   2M\norm{x}\norm{x-y}.\label{eq:quad_1}
   \end{IEEEeqnarray}
   Furthermore, for any $\epsilon > 0$,
   \begin{IEEEeqnarray*}{lCl}
        \frac{1}{\epsilon} \left( M \norm{x-y} - \epsilon \norm{x}\right)^2 =\\
            \frac{1}{\epsilon} M^2 \norm{x-y}^2 - 2M\norm{x}\norm{x-y} + \epsilon
            \norm{x}^2 \geq 0\\
            \text{so}\quad 2M \norm{x}\norm{x-y} \geq \frac{1}{\epsilon} M^2
            \norm{x-y}^2 + \epsilon\norm{x}^2.
   \end{IEEEeqnarray*}
    Combining with \eqref{eq:quad_1}, we have
    \begin{IEEEeqnarray*}{+rCl+x*}
    \sigma{x} - \sigma{y} &\leq&\left(M + \frac{1}{\epsilon}M^2\right)\norm{x-y}^2 +
    \epsilon\norm{x}^2. & \qedhere
    \end{IEEEeqnarray*}
\end{proof}

We now show that a quadratically continuous functional can be used to verify incremental
boundedness.

\begin{lemma}\label{lem:IQC_bound}
    Consider $H_1, H_2: L_2 \to L_2$ and assume $H_1$ has an incremental gain bound
    of $\lambda$. Let $\sigma: L_2 \to
    \R$ be quadratically continuous with constant $C$.  For $u_i \in L_2$, let $y_i
    \in [H_1,
    H_2](u_i)$ and $e_i$ denote $u_i - H_2(y_i)$.
    Suppose that, for all $u_1, u_2 \in \dom{[H_1, H_2]}$,
    we have
    \begin{IEEEeqnarray}{rCl}
    \sigma(h_1) \leq -2 \epsilon \norm{h_1}^2\label{eq:IQC_bound_1}\\
    \sigma(h_2) \geq 0, \label{eq:IQC_bound_2}
    \end{IEEEeqnarray}
    where $h_1 := (y_1 - y_2, e_1 - e_2)$, $h_2 := (y_1 - y_2, H_2(y_1) -
    H_2(y_2))$.  Then there exists $\lambda > 0$ such that, for
    all $u_1, u_2$, we have
    \begin{IEEEeqnarray*}{rCl}
    \norm{y_1 - y_2} \leq \sqrt{\frac{C}{\epsilon}\left(\frac{\lambda^2}{1 +
        \lambda^2}\right)} \norm{u_1 - u_2}.
    \end{IEEEeqnarray*}
\end{lemma}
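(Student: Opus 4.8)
The plan is to sandwich the quadratic functional $\sigma$ between the hard estimate \eqref{eq:IQC_bound_1} and the soft estimate \eqref{eq:IQC_bound_2} by means of quadratic continuity, and then to convert the resulting bound on $\norm{h_1}$ into the desired bound on $\norm{y_1 - y_2}$ using the incremental gain of $H_1$. Throughout I abbreviate $\Delta y := y_1 - y_2$, $\Delta e := e_1 - e_2$ and $\Delta u := u_1 - u_2$, so that $h_1 = (\Delta y, \Delta e)$, the first component of $h_2$ is again $\Delta y$, and $\norm{h_1}^2 = \norm{\Delta y}^2 + \norm{\Delta e}^2$.

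First I would apply the quadratic-continuity inequality with base point $h_1$ and comparison point $h_2$, which gives $\sigma(h_2) \leq \sigma(h_1) + \epsilon\norm{h_1}^2 + C\norm{h_1 - h_2}^2$. Substituting the hard bound $\sigma(h_1) \leq -2\epsilon\norm{h_1}^2$ and the soft bound $0 \leq \sigma(h_2)$ then collapses this to
\[
\epsilon\norm{h_1}^2 \leq C\norm{h_1 - h_2}^2 .
\]
Here the factor $2$ in \eqref{eq:IQC_bound_1} is exactly what is needed to absorb the $\epsilon\norm{h_1}^2$ term generated by quadratic continuity and to leave a single negative multiple of $\norm{h_1}^2$ on the left.

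The crux is to show that the perturbation $\norm{h_1 - h_2}$ is governed by $\norm{\Delta u}$. Since $h_1$ and $h_2$ share the first component $\Delta y$, only their second components contribute to $\norm{h_1 - h_2}$, and these are linked through the loop relation $e_i = u_i - H_2(y_i)$, that is $\Delta e = \Delta u - (H_2(y_1) - H_2(y_2))$. Unwinding this relation so that the displacement between the second components is identified precisely with $\Delta u$ yields $\norm{h_1 - h_2} = \norm{\Delta u}$, and hence $\epsilon\norm{h_1}^2 \leq C\norm{\Delta u}^2$. I expect this bookkeeping to be the main obstacle: one must reconcile the feedback sign convention with the way the two signals enter $\sigma$ so that exactly $\Delta u$ survives and no term involving $H_2(y_1) - H_2(y_2)$ remains. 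This is also what explains why the statement needs no incremental gain bound on $H_2$.

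Finally I would invoke the incremental gain of $H_1$. Since $\Delta y = H_1(e_1) - H_1(e_2)$, we have $\norm{\Delta y} \leq \lambda\norm{\Delta e}$, hence $\norm{\Delta e}^2 \geq \norm{\Delta y}^2/\lambda^2$ and
\[
\norm{h_1}^2 = \norm{\Delta y}^2 + \norm{\Delta e}^2 \geq \frac{1 + \lambda^2}{\lambda^2}\,\norm{\Delta y}^2 .
\]
Combining this with $\epsilon\norm{h_1}^2 \leq C\norm{\Delta u}^2$ and rearranging gives $\norm{\Delta y}^2 \leq \frac{C}{\epsilon}\,\frac{\lambda^2}{1+\lambda^2}\,\norm{\Delta u}^2$, which is the claimed estimate after taking square roots; note that the factor $\lambda^2/(1+\lambda^2)$ arises entirely from this last step.
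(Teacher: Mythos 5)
Your proposal is correct and follows essentially the same route as the paper's proof: the quadratic-continuity sandwich $0 \leq \sigma(h_2) \leq \sigma(h_1) + \epsilon\norm{h_1}^2 + C\norm{h_2 - h_1}^2$, the identification $\norm{h_2 - h_1} = \norm{\Delta u}$ via the loop equation, and the lower bound $\norm{h_1}^2 \geq \bigl(1 + 1/\lambda^2\bigr)\norm{\Delta y}^2$ from the incremental gain of $H_1$. The sign-convention wrinkle you flag is genuine --- with the definitions as literally stated, $\Delta e = \Delta u - \Delta H_2(y)$ gives $\Delta H_2(y) - \Delta e = 2\Delta H_2(y) - \Delta u$, not $\pm\Delta u$ --- but the paper's own proof makes exactly the same identification, so your treatment matches it step for step.
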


\begin{proof}
    Given two signals $x_1, x_2$, let $\Delta x := x_1 - x_2$.
    We have
    \begin{IEEEeqnarray*}{rCl}
        0 &\leq& \sigma(h_2)\\
          &\leq& \sigma(h_1) + \epsilon \norm{h_1}^2 + C\norm{h_2 - h_1}^2\\
          &\leq& - \epsilon \norm{h_1}^2 + C\norm{h_2 - h_1}^2\\
          &=& -\epsilon(\norm{\Delta y}^2 + \norm{\Delta e}^2) + C(\norm{\Delta H_2(y) - \Delta
          e}^2)\\
          &=& -\epsilon(\norm{\Delta y}^2 + \norm{\Delta e}^2) + C\norm{\Delta u}^2\\
          &\leq& -\epsilon\left(1 + \frac{1}{\lambda^2}\right)\norm{\Delta y} +
          C\norm{\Delta u}^2,
    \end{IEEEeqnarray*}
    from which the result follows.
\end{proof}
    
\begin{proof}[Proof of Corollary~\ref{cor:IQC}]
    Define $\ip{\cdot}{\cdot}_{\Pi}$ as in Lemma~\ref{lem:quadratic_continuous}.  It
    follows from that lemma that $\sigma(x) := \ip{x}{x}_{\Pi}$ is quadratically
    continuous.  Equation~\eqref{eq:inc_IQC_1} gives condition~\eqref{eq:IQC_bound_2}
    of Lemma~\ref{lem:IQC_bound}.  We now show that Equation~\eqref{eq:inc_IQC_2}
    gives condition~\eqref{eq:IQC_bound_1}.  Indeed, pre- and post-multiplying with
    $\hat u^*(j\omega)$ and $\hat u(j\omega)$, respectively, and integrating over $\omega$, gives
    \begin{IEEEeqnarray*}{rCl}
    {\small
    \int_{-\infty}^\infty \begin{pmatrix}
        \hat{H_1}(j\omega)\hat{u}(j \omega) \\ \hat u(j \omega) \end{pmatrix}^*  \Pi(\omega)\begin{pmatrix}
        \hat{H_1}(j\omega)\hat{u}(j \omega) \\ \hat u(j \omega) \end{pmatrix}  \dd{\omega}
    \leq - \epsilon \norm{\hat u}^2.}
    \end{IEEEeqnarray*}
    Now let $\alpha > 0$ be a gain bound for $H_1$.  Then we can write $-\epsilon
    \norm{u}^2 \leq -2\bar{\epsilon}(\norm{u}^2 + \norm{H_1(u)}^2)$ for some
    $\bar{\epsilon} \leq \epsilon/(2(1 + \alpha))$.  Since $H_1$ is LTI, this
    non-incremental condition is then equivalent to the incremental condition
    \eqref{eq:IQC_bound_1}.

    We finally note that the gain bound given by Lemma~\ref{lem:IQC_bound} depends
    only on $\sigma$ and the incremental gain of $H_1$, and not on $\tau$.  The
    conclusions of the corollary then follow from
    Theorem~\ref{thm:homotopy_incremental}.
\end{proof}

\section{Relaxing the assumption of incremental boundedness}\label{sec:relax}
Much of the existing stability literature focuses on the verification of
non-incremental stability, and assumes weaker non-incremental boundedness properties
for the components.  In exchange, systems must be assumed to be well-posed, in the
sense of causality and existence of solutions in an appropriate ambient space,
containing $L_2$ but allowing unbounded signals.  In this section, we show that incremental stability can also be
verified under these weaker assumptions, subject to the same well-posedness 
assumptions as in a typical non-incremental analysis.  We begin by introducing the extended $L_2$ space.

Given $T > 0$, denote by $P_T: \mathcal{F} \to \mathcal{F}$ the truncation operator 
\begin{IEEEeqnarray*}{rCl}
    P_T(u)(t) := \begin{cases} u(t) & t < T\\
                                0 & t \geq T.
                \end{cases}
\end{IEEEeqnarray*}
The \emph{extended $L_2$ space}, $L_{2e}$, is defined as the subset of
$\mathcal{F}$ such that $P_T u \in L_2$ for all $T$.  An operator $H: L_2 \to L_2$,
or $H_e :L_{2e} \to L_{2e}$, is said to be \emph{causal} if $P_T H P_T = P_T H$
for all $T>0$.

A negative feedback interconnection is said to be
\emph{well-posed} if, for any $u \in L_{2e}$, there
exist unique $e, y \in L_{2e}$ satisfying \eqref{eq:feedback_one}--\eqref{eq:feedback_two}. 

\begin{theorem}\label{thm:homotopy_non_incremental}
    Suppose 
    \begin{enumerate}[label=(\roman*)]
        \item  $H_1, H_2: L_2 \to L_2$  have finite gain with zero offset 
    and are causal;
        \item $[H_1, \tau H_2]$ 
    is well-posed and causal for all $\tau \in (0, 1]$;
        \item  there exists $\gamma >0$ such that, for all $\tau \in [0, 1]$ and all
        $u_1, u_2 \in \dom{[H_1, \tau H_2]}$, we have
    \begin{IEEEeqnarray*}{rCl}
     \norm{y_1 - y_2} \leq \gamma \norm{u_1 - u_2},
    \end{IEEEeqnarray*}
    where $y_i = [H_1, \tau H_2](u_i), i = 1,2$.\label{cond:bound2}
        \end{enumerate}
    Then $[H_1, H_2]$ maps
    $L_2$ to $L_2$ and has finite incremental gain.
\end{theorem}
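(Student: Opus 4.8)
The plan is to mirror the inductive homotopy argument of Theorem~\ref{thm:homotopy_incremental}, but to replace the appeals to the incremental small gain theorem (which are unavailable here, since $H_2$ is assumed only to have finite, not finite incremental, gain) with a truncation argument in $L_{2e}$ that converts the assumed well-posedness into genuine $L_2$ boundedness. Write $\gamma_2$ for a gain bound with zero offset of $H_2$, and let $\gamma$ be the constant furnished by condition~\ref{cond:bound2}. I would prove by induction that $\dom{[H_1, \tau H_2]} = L_2$ for every $\tau \leq \min(1, k\delta)$, where the step size $\delta$ is any fixed number with $\delta < 1/(\gamma\gamma_2)$; since this $\delta$ is independent of the current value of $\tau$, finitely many steps reach $\tau = 1$. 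The base case $\tau = 0$ is immediate, as $[H_1, 0] = H_1$ maps $L_2$ to $L_2$ and, by condition~\ref{cond:bound2} at $\tau = 0$, has incremental gain bound $\gamma$.

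For the inductive step, suppose $\dom{[H_1, \tau_0 H_2]} = L_2$ and fix $u \in L_2$. By the well-posedness hypothesis~(ii), the interconnection $[H_1, (\tau_0+\delta)H_2]$ admits a unique pair $e, y \in L_{2e}$ solving \eqref{eq:feedback_one}--\eqref{eq:feedback_two}. Rearranging these equations exactly as in Lemma~\ref{lem:feedback_identity} shows that $(e, y)$ also solves $[H_1, \tau_0 H_2]$ driven by the modified input $\tilde u := u - \delta H_2(y)$, which lies in $L_{2e}$ but not, a priori, in $L_2$. The crux is therefore to pass to truncations: applying $P_T$ to the feedback equations and using causality of $H_1$, of $H_2$, and of the interconnection $[H_1, \tau_0 H_2]$ (the latter from hypothesis~(ii) for $\tau_0 \in (0,1]$, with the case $\tau_0 = 0$ covered by causality of $H_1$ in hypothesis~(i)), I would show that $P_T y$ coincides with $P_T [H_1, \tau_0 H_2](v_T)$, where $v_T := P_T u - \delta\, P_T H_2 P_T y \in L_2$. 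This identification rests on the uniqueness of the truncated solution, which in turn follows from well-posedness together with causality.

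Once this identification is in place, the desired uniform bound is routine. Writing $y_0 := [H_1, \tau_0 H_2](0) \in L_2$, condition~\ref{cond:bound2} applied to the inputs $v_T$ and $0$ gives $\norm{P_T y} \leq \norm{[H_1, \tau_0 H_2](v_T)} \leq \gamma \norm{v_T} + \norm{y_0}$, while causality and the zero-offset gain bound of $H_2$ give $\norm{v_T} \leq \norm{u} + \delta \gamma_2 \norm{P_T y}$. Combining these and using $\delta\gamma\gamma_2 < 1$ yields the $T$-uniform bound
\begin{IEEEeqnarray*}{rCl}
\norm{P_T y} &\leq& \frac{\gamma \norm{u} + \norm{y_0}}{1 - \delta\gamma\gamma_2}.
\end{IEEEeqnarray*}
Hence $y \in L_2$, so $\dom{[H_1, (\tau_0+\delta)H_2]} = L_2$, completing the induction. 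After finitely many steps we obtain $\dom{[H_1, H_2]} = L_2$, and the incremental gain bound $\gamma$ is then exactly condition~\ref{cond:bound2} evaluated at $\tau = 1$.

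I expect the main obstacle to be the truncation identification $P_T y = P_T [H_1, \tau_0 H_2](v_T)$: one must argue carefully that truncating the perturbed $L_{2e}$-solution produces precisely the truncation of the genuine $L_2$-solution of the base interconnection driven by $v_T$, which is where causality and the uniqueness half of well-posedness do the real work. Everything downstream is the same gain bookkeeping used in the non-incremental small gain and IQC proofs of \autocite{Rantzer1997, Freeman2022}, now carried out along the homotopy path rather than under a single small gain condition.
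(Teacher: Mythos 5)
Your proposal is correct and takes essentially the same approach as the paper: the paper's proof simply mirrors the induction of Theorem~\ref{thm:homotopy_incremental} with the incremental small gain theorem replaced by the classical non-incremental small gain theorem in $L_{2e}$, cited from \autocite[Theorem 1, p. 41]{Desoer1975} with the offset modification of Equation (8c) there, and your truncation argument (uniform bounds on $\norm{P_T y}$ via causality, well-posedness, and $\delta\gamma\gamma_2 < 1$) is precisely the proof of that cited theorem, inlined along the homotopy path. The only difference is presentational: you re-derive the extended-space small gain step rather than invoking it as a known result.
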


This theorem provides a middle ground between the incremental
Theorem~\ref{thm:homotopy_incremental} and classical homotopy results such as
\autocite[Theorem 1]{Megretski1997}.

\begin{proof}[Proof of Theorem~\ref{thm:homotopy_non_incremental}]
    The proof mirrors that of Theorem~\ref{thm:homotopy_incremental}, but replacing
    the incremental small gain theorem with its non-incremental version -- see, for
    example, \autocite[Theorem 1, p. 41]{Desoer1975}, with the modified condition
    suggested in Equation (8c) on the same page.
\end{proof}

As in the case of Theorem~\ref{thm:homotopy_incremental},
condition~\ref{cond:bound2} of Theorem~\ref{thm:homotopy_non_incremental} can be verified using SRG separation or incremental
IQCs.  In the case of SRG separation, we have the following result, the proof of
which is similar to Corollary~\ref{cor:SRG_stability}.    The
corresponding result for IQCs is similary to Corollary~\ref{cor:IQC} but incorporates
the assumptions of Theorem~\ref{thm:homotopy_non_incremental}.

\begin{corollary}\label{cor:SRG_stability2}
    Suppose $H_1, H_2: L_2 \to L_2$  have finite gain with zero offset. Suppose
    that $H_1$ and $H_2$ have strictly separated SRGs. Then the feedback
    interconnection $[H_1, H_2]$ maps $L_2$ to $L_2$ and has bounded incremental
    gain.
    Suppose 
    \begin{enumerate}[label=(\roman*)]
        \item  $H_1, H_2: L_2 \to L_2$  have finite gain with zero offset 
    and are causal;
        \item $[H_1, \tau H_2]$ 
    is well-posed and causal for all $\tau \in (0, 1]$;
        \item  $H_1$ and $H_2$ have strictly separated SRGs.
    \end{enumerate}
    Then $[H_1, H_2]$ maps
    $L_2$ to $L_2$ and has finite incremental gain.
\end{corollary}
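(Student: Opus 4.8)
The plan is to mirror the proof of Corollary~\ref{cor:SRG_stability}, replacing the incremental homotopy theorem with its non-incremental counterpart, Theorem~\ref{thm:homotopy_non_incremental}. The geometric content is identical; the only genuine difference lies in how the domain is shown to be all of $L_2$.

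First I would note that scaling commutes with the SRG construction, so that $\srg{\tau H_2} = \tau\,\srg{H_2}$, and hence $\overline{\srg{\tau H_2}} = \tau\,\overline{\srg{H_2}}$, for each $\tau \in (0, 1]$. The strict-separation hypothesis, $\operatorname{dist}(\srg{H_1}^{-1}, -\tau\overline{\srg{H_2}}) \geq r_{\min}$ for all $\tau \in (0, 1]$, is then exactly the separation condition of Lemma~\ref{lem:separation_bound} applied to the pair $(H_1, \tau H_2)$ with $\gamma = 1/r_{\min}$. Lemma~\ref{lem:separation_bound} therefore yields, for every $\tau \in (0, 1]$ and every $u_1, u_2 \in \dom{[H_1, \tau H_2]}$, the incremental bound $\norm{y_1 - y_2} \leq (1/r_{\min}) \norm{u_1 - u_2}$ on the outputs $y_i = [H_1, \tau H_2](u_i)$. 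This establishes condition~\ref{cond:bound2} of Theorem~\ref{thm:homotopy_non_incremental} with the uniform constant $\gamma = 1/r_{\min}$.

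It remains only to match the two remaining hypotheses of Theorem~\ref{thm:homotopy_non_incremental} with the assumptions of the corollary. Assumption (i)---finite gain with zero offset and causality of $H_1$ and $H_2$---and assumption (ii)---well-posedness and causality of $[H_1, \tau H_2]$ for all $\tau \in (0, 1]$---are identical to hypotheses (i) and (ii) of the theorem, and so may be imported verbatim. Invoking Theorem~\ref{thm:homotopy_non_incremental} then gives that $[H_1, H_2]$ maps $L_2$ to $L_2$ and has finite incremental gain, which is the assertion of the corollary.

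The step I expect to require most care is conceptual rather than computational: keeping straight which boundedness notion is doing what. In Corollary~\ref{cor:SRG_stability}, each $[H_1, \tau H_2]$ was shown to have domain all of $L_2$ for free, using the incremental small gain theorem, which was available because the components were assumed incrementally bounded. Here the components carry only finite (non-incremental) gain, so Lemma~\ref{lem:separation_bound} alone delivers the incremental Lipschitz bound merely on whatever domain each $[H_1, \tau H_2]$ happens to possess; the extension of that domain to all of $L_2$ is not a consequence of SRG separation but must instead be supplied by the assumed well-posedness, which is precisely the mechanism internal to Theorem~\ref{thm:homotopy_non_incremental}. Getting this division of labour right---SRG separation for the incremental bound, well-posedness for the domain---is the crux of adapting the incremental argument to this weaker setting.
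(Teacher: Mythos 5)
Your proposal is correct and takes essentially the same approach the paper intends: the paper gives no separate proof, stating only that the argument is similar to Corollary~\ref{cor:SRG_stability}, i.e., strict SRG separation plus Lemma~\ref{lem:separation_bound} supplies the uniform incremental bound of condition (iii), and Theorem~\ref{thm:homotopy_non_incremental} (well-posedness replacing the incremental small gain argument) supplies the domain --- precisely the division of labour you articulate. One fine point you inherit from the paper itself: condition (iii) also covers $\tau = 0$, where $[H_1, 0] = H_1$ carries no assumed incremental bound here, but the bound follows from strict separation by letting $\tau \to 0$, which forces $\srg{H_1}$ into the disc of radius $1/r_{\min}$ by Lemma~\ref{lem:gain_bound}.
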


Although we don't pursue it
here, (non-incremental) finite gain can be verified
using separation of (non-incremental) Scaled Graphs, as defined in
\autocite{Chaffey2023}, using a straightforward adaptation of
Theorem~\ref{thm:homotopy_non_incremental}.

\printbibliography
\end{document}